\documentclass[12pt, reqno, a4paper]{amsart}
\usepackage[utf8]{inputenc}
\usepackage{amsmath,amssymb,amsthm}
\usepackage{todonotes}
\usepackage[lmargin=35mm,rmargin=35mm,bmargin=30mm,tmargin=30mm]{geometry}
\usepackage{hyperref}
\usepackage{mathtools}
\newtheorem{theorem}{Theorem}

\newtheorem{lemma}[theorem]{Lemma}

\newtheorem{proposition}[theorem]{Proposition}

\newtheorem{problem}{Problem}

\newcommand{\Z}{\mathbb{Z}}
\newcommand{\R}{\mathbb{R}}
\newcommand{\N}{\mathbb{N}}

\newcommand{\eps}{\varepsilon}

\DeclareMathOperator{\ec}{ec}

\title{Exact hyperplane covers for subsets of the hypercube}
\thanks{This work has received funding from the European Research Council (ERC) under the European Union's Horizon 2020 research and innovation programme (grant agreement No~648509).}

\author[J. Aaronson]{James Aaronson}
\address{James Aaronson, Mathematical Institute, University of Oxford, Oxford OX2 6G, United Kingdom}
\email{james.aaronson.maths@gmail.com}
\author[C. Groenland]{Carla Groenland}
\address{Carla Groenland, Mathematical Institute, University of Oxford, Oxford OX2 6G, United Kingdom}
\email{groenland@maths.ox.ac.uk}
\author[A. Grzesik]{Andrzej Grzesik}
\address{Andrzej Grzesik, Faculty of Mathematics and Computer Science, Jagiellonian University, {\L}ojasiewicza 6, 30-348 Krak\'{o}w, Poland}
\email{Andrzej.Grzesik@uj.edu.pl}
\author[T. Johnston]{Tom Johnston}
\address{Tom Johnston, Mathematical Institute, University of Oxford, Oxford OX2 6G, United Kingdom}
\email{thomas.johnston@maths.ox.ac.uk}
\author[B. Kielak]{Bart{\l}omiej Kielak}
\address{Bart{\l}omiej Kielak, Faculty of Mathematics and Computer Science, Jagiellonian University, {\L}ojasiewicza 6, 30-348 Krak\'{o}w, Poland}
\email{bartlomiej.kielak@doctoral.uj.edu.pl}

\date{\today}

\begin{document}
\maketitle
\begin{abstract}
    Alon and F\"{u}redi (1993) showed that the number of hyperplanes required to cover $\{0,1\}^n\setminus \{0\}$ without covering $0$ is $n$. We initiate the study of such exact hyperplane covers of the hypercube for other subsets of the hypercube. In particular, we provide exact solutions for covering $\{0,1\}^n$ while missing up to four points and give asymptotic bounds in the general case.
    Several interesting questions are left open. 
\end{abstract}
\section{Introduction}
A vector $v\in \R^n$ and a scalar $\alpha\in \R$ determine the hyperplane 
\[
\{x\in \R^n:\langle v,x\rangle \coloneqq v_1x_1+\dots+v_nx_n=\alpha\}
\]
in $\R^n$. How many hyperplanes are needed to cover $\{0,1\}^n$? Only two are required; for instance, $\{x:x_1=0\}$ and $\{x:x_1=1\}$ will do. What happens however if $0\in \R^n$ is not allowed on any of the hyperplanes? We can `exactly' cover  $\{0,1\}^n\setminus \{0\}$ with $n$ hyperplanes: for example, the collections $\{\{x:x_i=1\}:i\in [n]\}$ or $\{\{x:\sum_{i=1}^nx_i=j\}:j \in [n]\}$ can be used, where $[n]:=\{1,2,\ldots,n\}$.
Alon and F\"{u}redi \cite{AlonFuredi} showed that in fact $n$ hyperplanes are always necessary. 

Recently, a variation was studied by Clifton and Huang \cite{CliftonHuang}, in which they require that each point from $\{0,1\}^n\setminus \{0\}$ is covered at least $k$ times for some $k\in \N$ (while $0$ is never covered). 
Another natural generalisation is to put more than just $0$ to the set of points we wish to avoid in the cover. For $B\subseteq \{0,1\}^n$, the \emph{exact cover} of $B$ is a set of hyperplanes whose union intersects $\{0,1\}^n$ exactly in~$B$ (points from $\{0,1\}^n\setminus B$ are not covered). Let $\ec(B)$ denote the \emph{exact cover number} of $B$, i.e., the minimum size of an exact cover of $B$. We will usually write $B$ in the form $\{0,1\}^n \setminus S$ for some subset $S \subseteq \{0,1\}^n$. In particular, the result of Alon and F\"{u}redi \cite{AlonFuredi} states that $\ec(\{0,1\}^n\setminus \{0\})=n$.

We first determine what happens if we remove up to four points.
\begin{theorem}
\label{thm:uptofour}
Let $S\subseteq\{0,1\}^n$.
\begin{itemize}
    \item If $|S|\in \{2,3\}$, then $\ec(\{0,1\}^n\setminus S)=n-1$.
    \item If $|S|=4$, then $\ec(\{0,1\}^n\setminus S)=n-1$ if there is a hyperplane $Q$ with $|Q\cap S|=3$ and $\ec(\{0,1\}^n\setminus S)=n-2$ otherwise.
\end{itemize}
\end{theorem}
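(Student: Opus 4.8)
The plan is to prove matching lower and upper bounds. For the lower bounds I would use the multilinear (polynomial) method behind Alon--F\"uredi, and for the upper bounds the symmetries of the cube together with an explicit ``block'' construction. The starting point for the lower bound is the observation that if hyperplanes $\{\langle v_i,x\rangle=\alpha_i\}_{i=1}^m$ exactly cover $\{0,1\}^n\setminus S$, then $P=\prod_{i=1}^m(\langle v_i,x\rangle-\alpha_i)$ vanishes on $\{0,1\}^n\setminus S$ and is nonzero on $S$. Reducing $P$ modulo the relations $x_i^2=x_i$ produces a multilinear polynomial $\tilde P$ with the same values on the cube and $\deg\tilde P\le\deg P=m$. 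Hence $m\ge\mu(S)$, where $\mu(S)$ denotes the least degree of a multilinear polynomial that is nonzero exactly on $S$, so it suffices to compute $\mu(S)$.

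To compute $\mu(S)$ I would write such a polynomial in the indicator basis as $Q=\sum_{p\in S}c_p\mathbf 1_p$ with all $c_p\neq 0$ (since $Q(p)=c_p$), and extract the coefficient of the monomial $\prod_{i\notin R}x_i$: a short calculation should give that it equals $\pm\sum_{p:\,p|_R=0}d_p$, where $d_p=c_p(-1)^{n-|p|}$. Thus $\mu(S)\le n-1-r$ if and only if there exist nonzero $(d_p)_{p\in S}$ with $\sum_{p:\,p|_R=0}d_p=0$ for every coordinate set $R$ with $|R|\le r$. The case $R=\varnothing$ forces $\sum_p d_p=0$, which is solvable exactly when $|S|\ge 2$ and gives $\mu(S)\le n-1$; the singleton cases $R=\{j\}$ then do the rest, since at any coordinate where $S$ splits unevenly one side-sum reduces to a single nonzero $d_p$, a contradiction. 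This immediately yields $\mu(S)=n-1$ for $|S|\in\{2,3\}$. For $|S|=4$ the singleton conditions are solvable precisely when the four points admit a (unique up to scaling) dependency $d\propto(1,-1,-1,1)$, i.e.\ when they form a parallelogram, in which case the pairwise conditions $R=\{j,k\}$ fail and $\mu(S)=n-2$; otherwise $\mu(S)=n-1$. To match the stated criterion I would use that no three distinct cube points are collinear, so four distinct points are either affinely independent or form a parallelogram, and that a parallelogram is exactly the case where every hyperplane through three of the points also contains the fourth, i.e.\ where there is no $Q$ with $|Q\cap S|=3$.

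For the upper bounds I would first remove, by symmetry, every \emph{inactive} coordinate (one on which all points of $S$ agree, say with common value $\beta$): the slab $\{x_j=1-\beta\}$ contains no point of $S$, so one axis hyperplane covers it and we recurse on the complementary subcube. On the remaining active coordinates I would group the coordinates into blocks according to the pattern they induce on $S$, and in each block $X$ use the interior weight hyperplanes $\{f_X=i:1\le i\le|X|-1\}$ with $f_X=\sum_{i\in X}x_i$. These avoid $S$ (whose points take only the extreme values $0$ and $|X|$ on each block) and leave uncovered exactly the ``macro-corners'', where each block is all-$0$ or all-$1$; these form a copy of $\{0,1\}^t$, with $t$ the number of distinct patterns, and $S$ sits at $|S|$ of its vertices. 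A cover of this small cube then lifts back to $\R^n$ by substituting the normalized functional $f_X/|X|$ for each macro-coordinate, and the hyperplane count telescopes to the claimed value. In the parallelogram case this is especially clean: there are only two active patterns, so $t=2$, the four points are all four vertices of the macro-square, nothing remains to be covered, and the total is $n-2$; for $|S|=2$ one recovers the familiar construction directly.

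The main obstacle I expect is the low-dimensional \emph{core} cases, namely configurations in which every coordinate already carries a distinct active pattern. Here the block reduction does not simplify anything, and slab hyperplanes provably cannot isolate $S$, since the uncovered set of any slab family is a sub-box (a product set) whereas $S$ is not. I therefore have to supply oblique hyperplanes by hand, and the finite but delicate part is the general-position $|S|=4$ core: I anticipate covering most macro-corners with a single weight hyperplane and capturing the remaining ones with one or two tilted hyperplanes chosen to dodge the three or four forbidden vertices, and then verifying that the resulting cover uses exactly $n-1$ hyperplanes so that it meets the lower bound. Organizing this case analysis so that it transparently reproduces the parallelogram/affine-independence dichotomy is where the bulk of the work will lie.
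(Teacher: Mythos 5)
Your lower bound is a genuinely different route from the paper's and, as far as I can check, it works. The paper gets the bound $n-2$ (resp.\ $n-1$) by quoting Alon--F\"uredi's Corollary~1 as a black box, and for the four-point dichotomy it uses a separate trick: if $Q$ covers exactly three points of $S$, adjoin $Q$ to a putative cover to get an exact cover of $\{0,1\}^n\setminus\{0\}$ and invoke $\ec(\{0,1\}^n\setminus\{0\})=n$. Your multilinear-coefficient computation replaces both steps at once: the conditions $\sum_{p:\,p|_R=0}d_p=0$ for $|R|\le 1$ force a $d_p$ to vanish unless every coordinate splits $S$ into two pairs drawn from at most two of the three pair-partitions, which (using that three distinct cube vertices are never collinear) is exactly the parallelogram case. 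This is self-contained and directly explains \emph{why} the threshold is the existence of a hyperplane meeting $S$ in three points. Do spell out the step ``all three pair-partitions appearing forces $d=0$'' and the equivalence ``affinely dependent $\Rightarrow$ parallelogram'' for distinct cube points; both are true but neither is a one-liner.

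The gap is in the upper bound, and it sits exactly where the paper itself had to resort to a computer. Your block/merge reduction (inactive coordinates via slabs, interior weight hyperplanes per pattern-block, lifting a cover of the macro-cube $\{0,1\}^t$ by substituting $f_X/|X|$) is the same mechanism as the paper's hyperplane $L(P)$ and correctly telescopes the count to $n-t+\ec(\{0,1\}^t\setminus S')$. But for $|S|=4$ the irreducible core has every one of the up to $7$ nonconstant patterns appearing exactly once, i.e.\ you must exhibit an exact cover of $\{0,1\}^t\setminus S'$ with $t-1$ hyperplanes for $t$ as large as $7$ and $S'$ in ``general position''. You defer this with ``I anticipate covering most macro-corners with a single weight hyperplane and capturing the remaining ones with one or two tilted hyperplanes'', but no such construction is given, and it is not routine: the paper handles precisely these configurations ($n\le 7$, all columns distinct) by an exhaustive computer search rather than by hand. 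Until those finitely many core covers are actually produced (by hand or by machine), the upper bound $\ec(\{0,1\}^n\setminus S)\le n-1$ for $|S|=4$ is not established. The $|S|=3$ core ($t=3$, two oblique hyperplanes missing $(0,0,0),(1,1,0),(1,0,1)$) is also left implicit, but that one is a genuinely short explicit check, as in the paper.
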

The upper bounds are shown by iteratively reducing the dimension of the problem by one using a single `merge coordinates' hyperplane; this allows us to reduce the question to the case $n\leq 7$, which we can handle exhaustively.

Since the number of required hyperplanes seems to decrease, a natural question is whether this pattern continues. For $n\in \N$ and $k\in [2^n]$, we also introduce the exact cover numbers
\begin{align*}
\ec(n,k)&=\max\{\ec(\{0,1\}^n\setminus S):S\subseteq\{0,1\}^n,~ |S|=k\},\\
\ec(n)&=\max\{\ec(B):B\subseteq\{0,1\}^n\}.
\end{align*}
Our main result concerns the asymptotics of $\ec(n)$ and implies that $\ec(n,k)$ can be much larger than $n$.
\begin{theorem}
\label{thm:ec:arbitrary}
For any positive integer $n$, $2^{n-2}/n^2\leq \ec(n)\leq 2^{n+1}/n$.
\end{theorem}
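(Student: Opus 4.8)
The plan is to prove the two bounds in Theorem~\ref{thm:ec:arbitrary} separately, since the upper and lower bounds require quite different ideas.

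\textbf{Upper bound.} The plan for $\ec(n)\leq 2^{n+1}/n$ is to exhibit, for \emph{any} target set $B\subseteq\{0,1\}^n$, an explicit exact cover of size at most $2^{n+1}/n$. Note that the ``generic'' hyperplane $\{x:\langle v,x\rangle=\alpha\}$ for a carefully chosen $v$ separates the $2^n$ cube points into level sets indexed by the values $\langle v,x\rangle$. The idea is to pick a weight vector $v\in\Z^n$ so that the map $x\mapsto\langle v,x\rangle$ is close to injective on $\{0,1\}^n$ — for instance, taking $v=(1,2,4,\dots,2^{n-1})$ makes it a bijection onto $\{0,1,\dots,2^n-1\}$, in which case each singleton of the cube lies on its own hyperplane and we could cover any $B$ exactly using $|B|$ hyperplanes. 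That alone gives only $\ec(n)\le 2^n$, so I would instead aim to cover \emph{many} points of $B$ with a single hyperplane. First I would argue by an averaging/counting argument that some hyperplane contains roughly a $(n/2^{n})$-fraction more points than a typical one; more precisely, I expect the bound to come from showing that among a suitable family of parallel hyperplanes, there is always one meeting $\{0,1\}^n$ in at least $n/2$ points while avoiding the complement of $B$, so that greedily removing such a hyperplane's worth of points and recursing yields a cover of size $O(2^n/n)$. The factor $2^{n+1}/n$ suggests covering $B$ in at most $2|B|/n\le 2^{n+1}/n$ steps, each step handling $\Omega(n)$ new points of $B$ with one hyperplane. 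The main technical point will be producing, at each stage, a single hyperplane that hits many of the remaining uncovered points of $B$ \emph{and} no point outside $B$; a clean way to do this is to use the binomial coefficient $\binom{n}{\lfloor n/2\rfloor}\ge 2^n/n$ together with the hyperplane $\{x:\sum x_i=\lfloor n/2\rfloor\}$ when $B$ contains that whole middle layer, and to reduce the general case to this by symmetry or by a suitable affine change of variables on the cube.

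\textbf{Lower bound.} For $\ec(n)\geq 2^{n-2}/n^2$ the plan is the probabilistic/counting method: I would show that a random (or carefully chosen extremal) set $B$ cannot be exactly covered by few hyperplanes, because each single hyperplane can contain only a bounded number of cube points while simultaneously the number of distinct intersection patterns $H\cap\{0,1\}^n$ is limited. The key quantitative input is that a hyperplane in $\R^n$ meets $\{0,1\}^n$ in at most $\binom{n}{\lfloor n/2\rfloor}$ points (a Littlewood--Offord-type bound, since the fibres of $x\mapsto\langle v,x\rangle$ over a single value have size at most the middle binomial coefficient, which is $O(2^n/\sqrt n)$). If every hyperplane covers at most $M$ cube points, then any exact cover of a set $B$ of size $k$ needs at least $k/M$ hyperplanes, giving $\ec(n)\ge 2^{n-1}/M$. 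Plugging $M=O(2^n/\sqrt n)$ would only give $\Omega(\sqrt n)$, which is far too weak, so the real argument must instead bound the number of \emph{exact} covers: there are at most $2^{2^n}$ subsets $B$, but the number of sets expressible as a union of $m$ hyperplane-intersections is at most (number of distinct hyperplane traces)$^{m}$, and if this count is smaller than $2^{2^n}$ then some $B$ has large $\ec$. So the plan is to bound the number of distinct sets $H\cap\{0,1\}^n$ arising from hyperplanes, take logarithms, and solve for the forced value of $m$.

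\textbf{Main obstacle.} I expect the crux to be the lower bound's counting step: bounding the number of distinct subsets of $\{0,1\}^n$ realisable as the intersection with a single hyperplane. A crude bound would use that a hyperplane is determined by $n+1$ of its cube points, giving at most $(2^n)^{n+1}=2^{n(n+1)}$ traces, whence $m\cdot n(n+1)\ge 2^n$ and $m\ge 2^{n}/(n(n+1))\ge 2^{n-2}/n^2$ for large $n$, which matches the claimed bound up to the explicit constant. Making this precise — checking that $n+1$ cube points on a hyperplane really do determine its trace, handling affine dependencies so the count is not an overestimate, and verifying the constant $2^{n-2}/n^2$ holds for \emph{all} positive integers $n$ and not merely asymptotically — is where the careful work lies, and is the step I would budget the most effort for.
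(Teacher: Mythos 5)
Your lower bound plan is essentially sound, and in fact it is a legitimate alternative to the paper's argument: the paper proves the lower bound probabilistically (a random $S$ of density $1/2$ hits every intersection pattern of size $\geq 2n^2$, so any exact cover of $\{0,1\}^n\setminus S$ uses only small patterns and hence needs $\geq 2^{n-2}/n^2$ hyperplanes), whereas your counting argument --- at most $2^{O(n^2)}$ distinct traces $H\cap\{0,1\}^n$, so unions of $m$ of them cannot exhaust all $2^{2^n}-1$ nonempty subsets unless $m\gtrsim 2^n/n^2$ --- is exactly the improvement the paper attributes to Alon in a remark after the proof. One correction to your counting step: it is not true that ``$n+1$ cube points on a hyperplane determine its trace''; the right statement is that the trace $P=H\cap\{0,1\}^n$ satisfies $P=\mathrm{affspan}(P)\cap\{0,1\}^n$, so $P$ is determined by an affine basis of itself, which has at most $n$ points, giving at most $(2^n)^n=2^{n^2}$ traces (this is the paper's Lemma \ref{lem:q_n_num_int_patterns}). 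Your weaker bound $2^{n(n+1)}$ still yields the constant $2^{n-2}/n^2$ for all $n\geq 1$, so this part survives.

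The upper bound, however, has a genuine gap. Your plan rests on the claim that at each greedy step there is a single hyperplane containing $\Omega(n)$ of the remaining points of $B$ and \emph{no} point of $\{0,1\}^n\setminus B$, with the general case ``reduced by symmetry or an affine change of variables'' to the case where $B$ contains the whole middle layer. No such reduction exists: for a generic $B$ every large level set $\{x:\sum x_i=j\}$ meets the complement of $B$, and more importantly the per-hyperplane quota itself is false (take $B$ to be, say, $n/2$ points in general position not lying on any common hyperplane that avoids $B^{c}$; your bound would then demand a single hyperplane, which need not exist). The two ideas you are missing, and which the paper uses, are: (i) \emph{every} subset of a radius-$1$ Hamming sphere $S(x)=\{y:d(x,y)=1\}$ is an intersection pattern (intersect $\{x:\sum_i x_i=1\}$ with coordinate hyperplanes and then perturb by a generic direction to restore codimension one); and (ii) the hypercube has total domination number at most $2^{n+1}/n$, so $\{0,1\}^n$ is covered by $M\leq 2^{n+1}/n$ such spheres. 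Writing $B=B_1\cup\dots\cup B_M$ with $B_i$ contained in the $i$-th sphere, each nonempty $B_i$ is realised exactly by one hyperplane, giving at most $M$ hyperplanes in total. Note that the bound here comes from the \emph{number of spheres}, not from each hyperplane covering many points --- individual $B_i$ may be tiny --- so the greedy ``$\Omega(n)$ new points per step'' framing is not the mechanism, and without ingredient (i) you have no supply of hyperplanes that avoid $B^{c}$ while hitting a prescribed subset of $B$.
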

The lower bound uses a random construction and the upper bound uses the fact that we can efficiently cover the hypercube with Hamming spheres. 

We leave open whether $\ec(n,k)\leq n$ when $k$ is sufficiently large with respect to $n$, but can show that $\ec(n,k)$ is always at most a constant (depending on $k$) away from $n$.
\begin{theorem}
\label{thm:ec:fixed_size}
For any positive integer $k$, 
\[n-\log_2(k) \leq \ec(n,k) \leq n-2^k+\ec(2^k,k).\]
\end{theorem}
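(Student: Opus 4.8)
The plan is to treat the two inequalities by quite different means: the lower bound by a restriction argument that reduces to the Alon--Füredi result $\ec(\{0,1\}^n\setminus\{0\})=n$, and the upper bound by an iterated ``merge coordinates'' reduction.

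For the lower bound I would exhibit a single hard set $S$ of size $k$. Put $d=\floor{\log_2 k}$, so that $2^d\le k$, and take $S$ to consist of the origin $0$ together with any $k-1$ further points of $\{0,1\}^n$, each chosen to have a $1$ somewhere among its first $d$ coordinates. Then the subcube $C=\{x\in\{0,1\}^n:x_1=\dots=x_d=0\}$, of dimension $n-d$, meets $S$ in exactly the point $0$. Given any exact cover of $\{0,1\}^n\setminus S$, I restrict every hyperplane to $C$: since no hyperplane of the cover passes through $0\in C$, none can contain $C$, so each restriction is either empty or a genuine hyperplane of the $(n-d)$-dimensional cube $C$, and together they exactly cover $C\setminus\{0\}$. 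Alon--Füredi then forces at least $n-d$ of them, giving $\ec(\{0,1\}^n\setminus S)\ge n-d=n-\floor{\log_2 k}\ge n-\log_2 k$, and hence $\ec(n,k)\ge n-\log_2 k$.

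For the upper bound the heart of the matter is a one-step reduction: if $m>2^{k-1}$ then $\ec(m,k)\le 1+\ec(m-1,k)$. To prove it, fix $S$ with $|S|=k$ in $\{0,1\}^m$ and, for each coordinate $i$, record the vector $c_i\in\{0,1\}^k$ of values taken by the $k$ points of $S$ in coordinate $i$. Group the $c_i$ into the $2^{k-1}$ classes $\{c,\bar{c}\}$; since $m>2^{k-1}$, pigeonhole gives two coordinates $i\ne j$ in the same class, meaning that all points of $S$ satisfy $x_i=x_j$, or all satisfy $x_i\ne x_j$. In the first case the single hyperplane $\{x:x_i+x_j=1\}$ covers exactly the points with $x_i\ne x_j$ while avoiding $S$ entirely; the points still to be covered lie in $\{x_i=x_j\}$, which I identify with $\{0,1\}^{m-1}$ by merging coordinates $i$ and $j$ (the other case is symmetric, using $\{x:x_i=x_j\}$ and the subcube $\{x_i\ne x_j\}$). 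A short check shows distinct points of $S$ remain distinct after the merge, so the image $S'$ again has size $k$; exactly covering $\{0,1\}^{m-1}\setminus S'$ and lifting those hyperplanes back (they still avoid $S$, which lives in the subcube) completes an exact cover of $\{0,1\}^m\setminus S$.

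Finally I iterate. Every dimension $m$ with $2^k<m\le n$ satisfies $m>2^{k-1}$, so applying the lemma $n-2^k$ times yields $\ec(n,k)\le(n-2^k)+\ec(2^k,k)$, with the final term bounded by definition. I expect the main obstacle to be the bookkeeping inside the reduction lemma rather than any single deep idea: one must verify carefully that the merge hyperplane misses $S$, that the identification with $\{0,1\}^{m-1}$ preserves both the distinctness of the $k$ points and the \emph{exactness} of the cover after lifting (so that no lifted hyperplane secretly touches $S$). The pigeonhole step is clean, but I would double-check that the threshold $2^{k-1}$ indeed lets the recursion run all the way down to the stated base dimension $2^k$.
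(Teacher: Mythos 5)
Your proof is correct. For the upper bound you have essentially rediscovered the paper's argument: the paper proves the same one-step reduction (Lemma \ref{lem:ec:k}, valid for $n\ge 2^{k-1}$) by first normalising $0\in S$ via vertex transitivity, disposing of any coordinate on which $S$ vanishes identically, and then pigeonholing the at most $2^{k-1}-1$ remaining column types to find $i\ne j$ with $s_i=s_j$ for all $s\in S$, followed by the same merge-and-lift of hyperplanes. Your grouping of columns into complementation classes $\{c,\bar c\}$ achieves this without the normalisation, at the cost of the marginally stronger hypothesis $m>2^{k-1}$, which is harmless since the iteration only descends to dimension $2^k$; your worry about the threshold is unfounded, as the last step goes from $2^k+1>2^{k-1}$ down to $2^k$. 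The lower bound is where you genuinely diverge. The paper simply invokes its quoted quantitative form of Alon--F\"uredi (Theorem \ref{thm:ATcor}: hyperplanes that miss something miss at least $2^{n-m}$ points), which immediately yields $\ec(\{0,1\}^n\setminus S)\ge n-\log_2 k$ for \emph{every} $S$ of size $k$. You instead construct one hard set and restrict the cover to the codimension-$\floor{\log_2 k}$ subcube that meets $S$ only at the origin, reducing to the basic statement $\ec(\{0,1\}^{n-d}\setminus\{0\})=n-d$; the restriction step is sound (a hyperplane of the cover cannot contain that subcube because it misses $0$), and in the theorem's range $n\ge 2^k$ there is ample room to choose the $k-1$ auxiliary points. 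Your route needs only the original Alon--F\"uredi theorem rather than its quantitative corollary and is a pleasant self-contained trick, but it certifies only the maximum $\ec(n,k)$, whereas the paper's one-line argument gives the stronger per-set bound it records in passing.
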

The proof of this theorem uses the same techniques as the proof of Theorem \ref{thm:uptofour}.
The problem of determining the asymptotics of $\ec(n)$ was also suggested by F\"{u}redi at Alon's birthday conference in 2016.

\section{Covering all but up to four points}
\label{sec:up_to_four}

In this section, we determine $\ec(\{0,1\}^n\setminus S)$ for subsets $S$ of size 2, 3 and 4. For the lower bounds, we use the following result of Alon and F\"{u}redi \cite{AlonFuredi}.
\begin{theorem}[Corollary 1 in \cite{AlonFuredi}]
\label{thm:ATcor}
If $n\geq m\geq 1$, then $m$ hyperplanes that do not cover all vertices of $\{0,1\}^n$ miss at least $2^{n-m}$ vertices.
\end{theorem}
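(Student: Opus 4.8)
The plan is to translate the covering condition into a statement about the nonvanishing set of a single polynomial, and then prove the resulting lower bound by induction on $n$. Given $m$ hyperplanes $\{x:\langle v_i,x\rangle=\alpha_i\}$ for $i\in[m]$, I would form the product
\[
P(x)=\prod_{i=1}^m\big(\langle v_i,x\rangle-\alpha_i\big),
\]
a polynomial of degree at most $m$. A vertex $x\in\{0,1\}^n$ lies on at least one of the hyperplanes precisely when $P(x)=0$, so the \emph{missed} vertices are exactly those with $P(x)\neq 0$, and the hypothesis that the hyperplanes do not cover all of $\{0,1\}^n$ says exactly that $P$ does not vanish identically on the cube. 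Hence it suffices to prove the following: a polynomial of degree at most $m$ that is not identically zero on $\{0,1\}^n$ is nonzero on at least $2^{n-m}$ vertices.

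First I would pass to a multilinear polynomial. Since $x_i^2=x_i$ for $x_i\in\{0,1\}$, repeatedly substituting $x_i^2\mapsto x_i$ produces a multilinear $\tilde P$ that agrees with $P$ on every vertex and whose degree does not exceed that of $P$; in particular the nonvanishing set on the cube is unchanged. So it is enough to establish the statement for multilinear $f$ of degree at most $m$. The core is then an induction on $n$. Writing $x'=(x_1,\dots,x_{n-1})$ and decomposing a multilinear $f$ as $f(x)=g(x')+x_nh(x')$ with $g=f|_{x_n=0}$ and $h=f|_{x_n=1}-f|_{x_n=0}$, both $g$ and $h$ are multilinear in $n-1$ variables with $\deg g\leq m$ and $\deg h\leq m-1$.

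I would split into two cases. If $h\not\equiv 0$ on $\{0,1\}^{n-1}$, the inductive hypothesis applied with the degree bound $m-1$ gives at least $2^{(n-1)-(m-1)}=2^{n-m}$ points $x'$ with $h(x')\neq 0$; for each such $x'$ the two values $f(x',0)=g(x')$ and $f(x',1)=g(x')+h(x')$ cannot both vanish (their difference is $h(x')\neq 0$), so each contributes at least one missed vertex, yielding $2^{n-m}$ in total. If instead $h\equiv 0$ on the cube, then $f$ depends only on $x'$ and $g\not\equiv 0$; the inductive hypothesis gives at least $2^{(n-1)-\min(m,n-1)}$ nonzeros of $g$, and each lifts to the two missed vertices $(x',0)$ and $(x',1)$, again producing at least $2^{n-m}$ in total.

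The hard part will be the degree bookkeeping at the boundary of the induction rather than any single estimate. In particular, when $m=n$ a multilinear polynomial in $n-1$ variables automatically has degree at most $n-1<m$, so I would run the second case with the effective degree $\min(m,n-1)$ and verify that $2\cdot 2^{(n-1)-(n-1)}=2\geq 2^{n-n}$ still meets the bound; the base case $n=0$ is a nonzero constant with its single missed vertex. The step that does the real work is the case split on whether $h$ vanishes: it is precisely the observation that a nonconstant affine function of the single Boolean variable $x_n$ is nonzero somewhere that converts nonzeros of the lower-degree polynomial $h$ into missed vertices of $f$ without sacrificing the factor of two, and this is what makes the exponent $n-m$ come out sharp.
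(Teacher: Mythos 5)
Your proof is correct, but a point of orientation first: the paper does not prove this statement at all --- it imports it verbatim as Corollary~1 of Alon and F\"{u}redi \cite{AlonFuredi} --- so the honest comparison is with their original argument, not with anything in this paper. Your route is essentially a cube-specialized, self-contained version of theirs. Alon and F\"{u}redi also pass from the $m$ hyperplanes to the product polynomial $P(x)=\prod_{i=1}^m(\langle v_i,x\rangle-\alpha_i)$, but they then invoke a more general lemma about polynomials on grids: a polynomial of degree $d$ not vanishing identically on $B_1\times\cdots\times B_n$ is nonzero on at least $\min\prod_i\max(1,|B_i|-d_i)$ points over decompositions $\sum_i d_i=d$, which for $B_i=\{0,1\}$ and $d=m\leq n$ yields $2^{n-m}$. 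Your replacement --- multilinearize via $x_i^2\mapsto x_i$, then induct on $n$ through the split $f(x)=g(x')+x_nh(x')$ with $\deg h\leq m-1$ --- is sound, and your two cases do the right work: when $h\not\equiv 0$ on $\{0,1\}^{n-1}$, each of the $2^{(n-1)-(m-1)}=2^{n-m}$ nonzeros of $h$ forces at least one missed vertex in the fiber $\{(x',0),(x',1)\}$, and when $h\equiv 0$ each nonzero of $g$ lifts to two missed vertices, with the count $2\cdot 2^{(n-1)-\min(m,n-1)}\geq 2^{n-m}$ holding because $\min(m,n-1)\leq m$; your boundary check at $m=n$ is exactly the place where a careless induction would break, and you handle it correctly. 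What the grid lemma buys Alon and F\"{u}redi is generality (arbitrary finite grids, unequal degrees per variable, and the sharp ``balls-in-bins'' bound); what your version buys is a short elementary proof using nothing beyond multilinear reduction, which is all this theorem needs.
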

For the upper bounds, it suffices to give an explicit construction of a collection of hyperplanes that exactly covers $\{0,1\}^n\setminus S$, for every subset $S$ of size 2, 3 or 4.
We split the proof of Theorem \ref{thm:uptofour} into two cases, the case where $|S| \in \{2, 3\}$ and the case where $|S| = 4$.
\begin{lemma}
\label{lem:cov:23}
Let $n \geq 2$ and $S\subseteq\{0,1\}^n$ with $|S|\in \{2,3\}$. Then $\ec(\{0,1\}^n\setminus S)=n-1$.
\end{lemma}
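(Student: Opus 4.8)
My plan is to establish the matching bounds $n-1\le \ec(\{0,1\}^n\setminus S)\le n-1$. For the lower bound, note that an exact cover realising $m:=\ec(\{0,1\}^n\setminus S)$ is a family of $m$ hyperplanes whose union meets $\{0,1\}^n$ in exactly $\{0,1\}^n\setminus S$, and hence misses precisely the $|S|\le 3$ points of $S$. As $|S|<2^n$, these hyperplanes do not cover every vertex, so Theorem~\ref{thm:ATcor} forces them to miss at least $2^{n-m}$ vertices. Were $m\le n-2$, this would be at least $2^{2}=4>3\ge|S|$, a contradiction; hence $m\ge n-1$.

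For the upper bound I would induct on $n$, the base case $n=2$ being a direct check that a single suitably sloped line covers the one or two required vertices while avoiding $S$. The symmetries of the cube (coordinate permutations and the flips $x_i\mapsto 1-x_i$) are affine bijections preserving the set of hyperplanes, so I may assume $0\in S$. I would then split according to whether some coordinate is constant on all of $S$. If coordinate $i$ is constant on $S$, a flip puts $S$ inside the face $\{x_i=0\}$; the coordinate hyperplane $\{x_i=1\}$ then covers the opposite face (which is disjoint from $S$), and covering $\{x_i=0\}\setminus S$ is exactly an exact-cover problem on $\{0,1\}^{n-1}$ with the same $|S|$. By induction this needs $n-2$ hyperplanes, giving $1+(n-2)=n-1$ in total. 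In particular, for $|S|=2$ this handles every non-antipodal pair, and the one remaining antipodal pair $\{0,\mathbf{1}\}$ is covered by the $n-1$ level sets $\{\sum_i x_i=j\}$ for $1\le j\le n-1$.

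The interesting case is when no coordinate is constant on $S$ (the \emph{all-mixed} case); here the face reduction is unavailable, and this is the step I expect to be the main obstacle. For $|S|=3$, writing $S=\{0,b,c\}$, the all-mixed condition becomes $\operatorname{supp}(b)\cup\operatorname{supp}(c)=[n]$, so the sets $P=\operatorname{supp}(b)\cap\operatorname{supp}(c)$, $Q=\operatorname{supp}(b)\setminus\operatorname{supp}(c)$ and $R=\operatorname{supp}(c)\setminus\operatorname{supp}(b)$ partition $[n]$. The key observation is that the block sums $\sum_{i\in P}x_i,\ \sum_{i\in Q}x_i,\ \sum_{i\in R}x_i$ take values in a box $[0,p]\times[0,q]\times[0,r]$ with $p+q+r=n$, and the three forbidden points are exactly the preimages of the corners $(0,0,0)$, $(p,q,0)$ and $(p,0,r)$. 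I would cover the box minus these three corners using hyperplanes that are level sets of block sums: the $(p-1)+(q-1)+(r-1)$ interior level sets $\{\sum_{i\in P}x_i=j\}$, and likewise for $Q$ and $R$, reach every non-corner point while avoiding $S$, leaving the eight corners, of which five must be covered and three avoided. Two further hyperplanes finish the job: one through the three good corners adjacent to the origin, and one of the form $\{\sum_{i\in Q\cup R}x_i=q+r\}$ through the remaining two good corners; both are easily checked to miss the three forbidden corners. This uses $n-1$ hyperplanes in total.

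The point where real care is needed, and which I expect to be the crux, is the family of degenerate all-mixed configurations in which one of $P,Q,R$ is empty; these force two points of $S$ to be antipodal (for instance $Q=\emptyset$ gives $c=\mathbf{1}$). In each such case the relevant box collapses to a rectangle, and I would instead cover a rectangle minus three of its corners using the analogous interior level sets together with one extra diagonal hyperplane through the surviving corner; checking that this diagonal can always be chosen to avoid the three forbidden corners for every admissible choice of block sizes is the only genuinely fiddly verification. Together with the $|S|=2$ analysis, this closes the induction and yields the upper bound $n-1$, matching the lower bound.
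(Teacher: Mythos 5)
Your proposal is correct and follows essentially the same route as the paper: the lower bound via Theorem~\ref{thm:ATcor} with $m=n-2$, and for the upper bound the same decomposition of $[n]$ into the blocks $P,Q,R$ determined by the supports, covered by interior block-level sets plus two corner hyperplanes (the paper disposes of the coordinates constant on $S$ directly with hyperplanes $\{x:x_i=1\}$ rather than by induction, which amounts to the same thing). The degenerate cases you flag do work out: the paper's explicit choices are $\bigl\{x:\tfrac{1}{q}\sum_{i\in Q}x_i+\tfrac{1}{r}\sum_{i\in R}x_i=2\bigr\}$ when $P=\emptyset$ and $\bigl\{x:-\tfrac{1}{p}\sum_{i\in P}x_i+\tfrac{1}{r}\sum_{i\in R}x_i=1\bigr\}$ when $Q=\emptyset$ (symmetrically for $R=\emptyset$).
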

\begin{proof}
For $n=2$ the statement is true, therefore let $n \geq 3$ and $S\subseteq\{0,1\}^n$ with $|S|\in \{2,3\}$. We first prove the lower bound $\ec(\{0,1\}^n\setminus S) \geq n-1$; this follows from applying the case of $m=n-2$ in Theorem \ref{thm:ATcor}. Indeed, this shows that any $n-2$ hyperplanes that do not cover all of $\{0,1\}^n$ miss at least $4$ vertices, and hence a minimum of $n-1$ hyperplanes are required to miss $2$ or $3$ vertices. 

For the upper bound, note that we may assume by vertex transitivity that $(0,\dots,0)\in S$. Consider first the case $|S|=2$. By relabelling the indices, we may assume the second vector $u$ in $S$ satisfies $\{i\in[n]:u_i=1\}=\{1,\dots,\ell\}$ for some $\ell\in \N$. We cover $\{0,1\}^n\setminus S$ by the collection of $n-1$ hyperplanes
\[
\{\{x:x_i=1\}:i\in \{\ell+1,\dots,n\}\}\cup \left\{\left\{x:x_1+\dots+x_\ell=j\right\}:j\in [\ell-1]\right\},
\]
noting none of these hyperplanes contain an element from $S$.

Now consider the case $|S|=3$. We may assume the second and third vectors in $S$ correspond 
to the subsets $\{1,\dots,a+b\}$ and $\{1,\dots,a\}\cup\{a+b+1,\dots,a+b+c\}$ for some $a,b,c\in \Z_{\geq 0}$ with $a+b\geq 1$ and $c\geq 1$. We first add the $n-(a+b+c)$ hyperplanes of the form $\{x:x_i=1\}$ for $i\in \{a+b+c+1,\dots,n\}$.   
For $x\in S$, we have
\begin{align*}
&x_1+\dots+x_a\in \{0,a\},\\
&x_{a+1}+\dots+x_{a+b}\in \{0,b\},\\
&x_{a+b+1}+\dots+x_{a+b+c}\in \{0,c\}.
\end{align*}
If $a\geq 1$, we add the $a-1$ hyperplanes $\{x:x_1+\dots+x_a=i\}$ for $i\in [a-1]$. Analogously, we add the $b-1$ hyperplanes $\{x: x_{a+1}+\ldots+x_{a+b}=i\}$ for $i\in[b-1]$ if $b\geq 1$, and the $c-1$ hyperplanes $\{x: x_{a+b+1}+\ldots+x_{a+b+c}=i\}$ for $i\in [c-1]$. The only points of $\{0,1\}\setminus S$ that are yet to be covered satisfy the equations above and also satisfy $x_i=0$ for $i>a+b+c$. 

Suppose first that $a,b\geq 1$. In this case we have added $n-3$ hyperplanes so far. The problem has effectively been reduced to covering $\{0,1\}^3$ with three missing points $(0,0,0), (1,1,0)\text{ and } (1,0,1)$ using $2$ hyperplanes. Indeed, we may add the following two hyperplanes to our collection in order to exactly cover $\{0,1\}^n\setminus S$:
\begin{align*}
&\left\{x:\frac{x_1+\dots+x_a}a+\frac{x_{a+1}+\dots+x_{a+b}}b+\frac{x_{a+b+1}+\dots+x_{a+b+c}}c=1\right\},\\
&\left\{x:\frac{x_{a+1}+\dots+x_{a+b}}b+\frac{x_{a+b+1}+\dots+x_{a+b+c}}c=2\right\}.
\end{align*}
Suppose now that $a=0$ or $b=0$. Since $a+b\geq 1$ and $c\geq 1$, we have used $n-2$ hyperplanes so far. If $a=0$, we may add the hyperplane 
\[
\left\{x:\frac{x_{1}+\dots+x_{b}}b+\frac{x_{b+1}+\dots+x_{b+c}}c=2\right\}
\]
and, if $b=0$, we add
\[
\left\{x:-\frac{x_1+\dots+x_a}a+\frac{x_{a+1}+\dots+x_{a+c}}c=1\right\}.
\]
In either case, the resulting collection covers $\{0,1\}\setminus S$ without covering any point in $S$.
\end{proof}

For the case of four missing points, we always need at least $n-2$ hyperplanes by Theorem \ref{thm:ATcor}. For $n=3$, we may need either $1$ or $2$ hyperplanes. For example, we may exactly cover $\{0,1\}^3\setminus (\{0\}\times \{0,1\}^2)$ by the single hyperplane $\{x:x_1=1\}$, but if $S$ does not lie on a hyperplane then we need two hyperplanes. 
The set $\{0\}\times \{0,1\}^2$ has the special property that there is no hyperplane that covers three of its points without covering the fourth.
It turns out this condition is exactly what decides how many hyperplanes are required when removing four points.
\begin{lemma}
\label{lem:cov:4}
Let $S\subseteq\{0,1\}^n$ with $|S|=4$. Then $\ec(\{0,1\}^n\setminus S)=n-1$ if there is a hyperplane $Q$ with $|Q\cap S|=3$ and $\ec(\{0,1\}^n\setminus S)=n-2$ otherwise.
\end{lemma}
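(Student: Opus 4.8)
The plan is to reduce everything to the affine geometry of the four points. Since any affine line contains at most two vertices of $\{0,1\}^n$, no three points of $S$ are collinear, so the affine span of $S$ has dimension $2$ or $3$. I would first record the reformulation of the dichotomy: there is a hyperplane $Q$ with $|Q\cap S|=3$ if and only if $S$ is affinely independent (a \emph{simplex}, affine dimension $3$). Indeed, if $S$ spans only a plane, then any three of its points already span that plane, so every hyperplane through three of them contains the fourth; whereas if $S$ is a simplex, the plane through any three points misses the fourth and extends (as $n\geq 3$) to a hyperplane meeting $S$ in exactly three points. Thus the two cases of the lemma are precisely ``$S$ coplanar'' (claimed value $n-2$) and ``$S$ a simplex'' (claimed value $n-1$).

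For the lower bounds I would treat the two cases separately. When $S$ is coplanar, $\ec\ge n-2$ is immediate from Theorem~\ref{thm:ATcor} with $m=n-3$: any $n-3$ hyperplanes missing some vertex miss at least $2^{3}=8>4$ of them. The delicate case is the simplex bound $\ec\ge n-1$, where Theorem~\ref{thm:ATcor} with $m=n-2$ yields only ``$\ge 4$ missed'' and does not exclude $n-2$. Here I would argue with the polynomial method underlying Theorem~\ref{thm:ATcor}. Suppose $n-2$ hyperplanes $\langle v_j,x\rangle=\alpha_j$ exactly cover $\{0,1\}^n\setminus S$; then $g(x)=\prod_{j=1}^{n-2}(\langle v_j,x\rangle-\alpha_j)$ has degree at most $n-2$ and, on the cube, is nonzero exactly on $S$. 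Passing to its multilinear reduction $\tilde g$ (degree at most $n-2$, same values on $\{0,1\}^n$) and writing $\tilde g=\sum_{p\in S}b_p\,\delta_p$ in the point-indicator basis, the vanishing of the degree-$n$ coefficient and of all degree-$(n-1)$ coefficients translates, after setting $\beta_p=(-1)^{n-|p|}b_p\neq 0$ (where $|p|$ is the number of ones of $p$), into the relations $\sum_{p\in S}\beta_p=0$ and $\sum_{p\in S}\beta_p\,p=0$. This is a nontrivial affine dependence of the four points with all coefficients nonzero, so $S$ is affinely dependent and cannot be a simplex --- a contradiction. Hence a simplex forces $\ec\ge n-1$.

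For the upper bounds I would give an explicit construction in the coplanar case and a reduction in general. One first checks a structural fact: up to a symmetry of the cube (coordinate permutations and flips), every coplanar four-set is a combinatorial square $\{0,u,v,u+v\}$ with $u,v$ of disjoint support, say $u=\mathbf 1_{\{1,\dots,a\}}$ and $v=\mathbf 1_{\{a+1,\dots,a+b\}}$; this follows by analysing the possible affine dependences of $0,1$-vectors (no vertex is a convex combination of others, so only parallelogram-type relations survive), as in the proof of Lemma~\ref{lem:cov:23}. One then covers $\{0,1\}^n\setminus S$ with the $n-2$ hyperplanes $\{x_i=1\}$ for $i>a+b$, together with $\{x_1+\dots+x_a=j\}$ for $j\in[a-1]$ and $\{x_{a+1}+\dots+x_{a+b}=j\}$ for $j\in[b-1]$, none of which meet $S$. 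In the simplex case I would instead use the ``merge coordinates'' reduction: if a coordinate is constant on $S$ one adds $\{x_i=1\}$ (after a flip if needed) and deletes it, and if two coordinates are equal (or, after a flip, complementary) on $S$ one adds the single hyperplane $\{x_i+x_{i'}=1\}$, which covers exactly the vertices with $x_i\neq x_{i'}$ and avoids $S$, and then identifies the two coordinates. Each step spends one hyperplane, lowers the dimension by one, and preserves the affine dimension of $S$; since at most seven columns, pairwise distinct up to complement and non-constant, can occur on four distinct points, this reduces to $n\le 7$, where $\ec\le n-1$ is verified directly on the finitely many configurations.

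The main obstacle is the simplex lower bound: the naive application of Theorem~\ref{thm:ATcor} is exactly one hyperplane short, and it is the refined degree argument --- extracting a genuine affine dependence from the vanishing of the top two layers of coefficients of $\tilde g$ --- that both pins down the value and explains the dichotomy, since the relation $\sum_{p}\beta_p\,p=0,\ \sum_p\beta_p=0$ with all $\beta_p\neq 0$ exists precisely when $S$ fails to be a simplex. On the upper-bound side the remaining work is bookkeeping: confirming that the merge step never increases the affine dimension (so the two cases never mix) and discharging the base cases $n\le 7$.
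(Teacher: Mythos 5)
Your proposal is correct, but it departs from the paper's proof in two substantive ways, both worth recording. First, for the lower bound $\ec\geq n-1$ in the ``simplex'' case, the paper has a one-line argument: normalise so that $0$ is the point of $S$ not on $Q$, add $Q$ to a putative exact cover of $\{0,1\}^n\setminus S$ to get an exact cover of $\{0,1\}^n\setminus\{0\}$, and invoke the Alon--F\"{u}redi theorem as a black box. You instead rerun the polynomial method: from $m\leq n-2$ covering hyperplanes you form the product polynomial, pass to its multilinear reduction $\tilde g=\sum_{p\in S}b_p\delta_p$, and extract from the vanishing of the degree-$n$ and degree-$(n-1)$ coefficients the relations $\sum_p\beta_p=0$ and $\sum_p\beta_p\,p=0$ with all $\beta_p\neq 0$; I checked the coefficient computation (the coefficient of $\prod_{j\neq i}x_j$ in $\delta_p$ is $0$ if $p_i=1$ and $(-1)^{n-|p|-1}$ if $p_i=0$) and it is right. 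This is longer than the paper's argument but self-contained, and it simultaneously establishes your reformulation of the dichotomy (a hyperplane meets $S$ in exactly three points iff $S$ is affinely independent), which the paper uses only implicitly and records in its conclusion as ``the $n-2$ sets are exactly the $2$-dimensional subcubes.'' Second, for the upper bound, the paper runs the merge-coordinates induction uniformly in both cases and relies on a computer check for the configurations with $n\leq 7$; you give an explicit $(n-2)$-hyperplane cover in the coplanar case after classifying coplanar quadruples as squares $\{0,u,v,u+v\}$ with $u,v$ of disjoint support (correct, though you only sketch the extreme-point/parallelogram argument that yields this normal form), so only the simplex case needs the reduction plus a finite check. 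Your merge step is the $a=2$ instance of the paper's general $a$-fold merge, and your observation that it preserves the affine dimension of $S$ plays exactly the role of the paper's remark that it preserves the existence of a hyperplane through exactly three points of $S$. Net effect: same skeleton for the upper bound with a slightly smaller residual computation, and a more hands-on but more informative lower bound.
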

\begin{proof}
We know that $\ec(\{0,1\}^n\setminus S)\geq n-2$ from Theorem \ref{thm:ATcor}. If there is a hyperplane $Q$ intersecting $S$ in exactly three points, then $\ec(\{0,1\}^n\setminus S)\geq n-1$. Indeed, by vertex transitivity, we may assume that $0$ is the point of $S$ uncovered by $Q$. Any exact cover of $\{0,1\}^n\setminus S$ can be extended to an exact cover of $\{0,1\}^n\setminus \{0\}$ by adding the hyperplane $Q$ to the collection.

We prove the claimed upper bounds by induction on $n$, handling the case $n\leq 7$ by computer search. 
Again, we may assume that $0\in S$. Let $u,v,w$ denote the other three vectors in $S$. For any $i$ with $u_i=v_i=w_i=0$, we can use a hyperplane of the form $\{x:x_i=1\}$ to reduce the covering problem to one of a lower dimension. (Note that dropping the coordinate $i$ in this case does not change whether three points in $S$ can be covered without covering the fourth.) Hence we may assume by induction that no such $i$ exists. 

After possibly permuting coordinates, we assume that $u_i=v_i=w_i=1$ on the first $a$ coordinates, $u_i=v_i=1$ and $w_i=0$ on the $b$ coordinates after that, and so on, i.e., sorted by decreasing Hamming weight and lexicographically within the same weight. In other words, our four vectors take the form
\begin{equation}
\label{eq:VennForm}
\begin{pmatrix}
0\\
u\\
v\\
w\\
\end{pmatrix}=
\begin{pmatrix}
0& 0 & 0& 0& 0 & 0& 0\\
1 & 1 & 1& 0 &1 & 0 & 0 \\
1 & 1 & 0 & 1 & 0 &1 & 0\\
1 & 0 & 1 & 1 & 0 & 0 & 1\\
\end{pmatrix}
,\end{equation}
where each column may be replaced with $0$ or more columns of its type. Since $n>7$, by the pigeonhole principle one of the columns must be repeated at least twice. We will show how to handle the case for which this is the first column (i.e. $a\geq 2$); the other cases are analogous.

Our collection of hyperplanes will contain the hyperplanes
\begin{equation}
\label{eq:ec:a}
\{\{x:x_1+\dots+x_a=i\}:i\in [a-1]\}.
\end{equation}
The only points $x$ which have yet to be covered have the property that $x_i$ takes the same value in $\{0,1\}$ for all $i\in [a]$. 
We now proceed similarly to the proof of Lemma \ref{lem:cov:23}. Informally, we wish to `merge' the first $a$ coordinates and then apply the induction hypothesis. For each $s\in S$, we define $\pi(s)=(s_{a},\dots,s_n)$. Let $\pi(S)=\{\pi(s):s\in S\}$. Then $|S|=|\pi(S)|=4$.

Any hyperplane  
\[
P=\{y:v_1y_1+\dots+v_{n-a+1}y_{n-a+1}=\alpha\}
\]
in $\{0,1\}^{n-a+1}$ can be used to define a hyperplane
\[
L(P)= \left\{x:v_1\frac{x_1+\dots+x_a}a+v_2x_{a+1}+\dots +v_{n-a+1} x_n=\alpha\right\}
\]
in $\{0,1\}^n$. For all $x\in \{0,1\}^n$ with $\sum_{i=1}^a x_i\in \{0,a\}$, we find that $\pi(x)\in P$ if and only if $x\in L(P)$. This shows that if $P_1,\dots,P_M$ form an exact cover for $\{0,1\}^{n-a+1}\setminus \pi(S)$, then $L(P_1),\dots,L(P_M)$, together with the hyperplanes from $(\ref{eq:ec:a})$, form an exact cover for $\{0,1\}^n\setminus S$. This proves 
\[
\ec(\{0,1\}^n\setminus S)\leq \ec(\{0,1\}^{n-a+1}\setminus \pi(S))+a-1.
\]
Since there is a hyperplane covering three points in $S$ without covering the fourth if and only if this is the case for $\pi(S)$, we find the claimed upper bounds by induction. 

Observe that the proof reduction works also in the case $n\leq 7$ if there are at least two coordinates of the same type in (\ref{eq:VennForm}). Thus, the computer verification is needed only in the case when each column in (\ref{eq:VennForm}) appears at most once. The code used to check the small cases is attached to the arXiv submission at \url{https://arxiv.org/abs/2010.00315}. 
\end{proof}

Another natural variant on the original Alon-F\"{u}redi problem is to ask for the exact cover number of a single layer of a hypercube without one point. It turns out this can be easily solved by translating it to the original problem. 
\begin{proposition}
Let $n\in \N$ and $i\in \{0,\dots,n\}$.
Let $B$ be obtained by removing a single point from the $i$-th layer $\{x\in \{0,1\}^n:x_1+ \ldots + x_n=i\}$. Then $\ec(B)=\min\{i,n-i\}$.
\end{proposition}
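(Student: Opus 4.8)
The plan is to reduce to the Alon--F\"uredi theorem inside a smaller cube, after two symmetry reductions. First, the two trivial layers $i\in\{0,n\}$ give $B=\emptyset$ and $\ec(B)=0=\min\{i,n-i\}$, so I assume $1\le i\le n-1$. Since the complementation map $x\mapsto \mathbf 1-x$ is an affine bijection of $\R^n$ fixing $\{0,1\}^n$, sending the $i$-th layer to the $(n-i)$-th and hyperplanes to hyperplanes, it preserves exact cover numbers; hence I may assume $i\le n-i$, so that $\min\{i,n-i\}=i$ and $2i\le n$. Using that coordinate permutations act transitively on each layer, I may further assume the removed point is $x^*=(1,\dots,1,0,\dots,0)$, with exactly its first $i$ coordinates equal to $1$.

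For the upper bound I would exhibit $i$ hyperplanes. For $j\in[i]$ set
\[
H_j=\Bigl\{x:(i+1)x_j+\textstyle\sum_{k\ne j}x_k=i\Bigr\}.
\]
Writing $w=\sum_k x_k$, a point of $\{0,1\}^n$ lies on $H_j$ iff $w+i\,x_j=i$; if $x_j=1$ this forces $w=0$, which is impossible, so $H_j\cap\{0,1\}^n$ is exactly the set of layer-$i$ points with $x_j=0$. Taking the union over $j\in[i]$ covers every layer-$i$ point having a $0$ among its first $i$ coordinates, and the only layer-$i$ point avoiding all of these is the one with $x_1=\dots=x_i=1$, namely $x^*$. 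Thus $\bigcup_{j=1}^i H_j$ meets $\{0,1\}^n$ exactly in $B$, giving $\ec(B)\le i$.

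For the lower bound I would embed an $i$-dimensional subcube of $\{0,1\}^n$ inside the $i$-th layer so that $x^*$ becomes a vertex of it. Using $2i\le n$, pair coordinate $j$ with $i+j$ and define
\[
C=\bigl\{x\in\{0,1\}^n: x_{i+j}=1-x_j\text{ for }j\in[i],\ x_k=0\text{ for }k>2i\bigr\}.
\]
Every point of $C$ has weight exactly $i$, so $C$ lies in the $i$-th layer, and the parametrisation $(y_1,\dots,y_i)\mapsto x$ with $x_j=y_j$ identifies $C$ affinely with $\{0,1\}^i$, sending $x^*$ to the all-ones vertex. Given any exact cover $H_1,\dots,H_m$ of $B$, restricting each $H_\ell$ to the affine hull of $C$ yields (after discarding empty intersections) a family of hyperplanes of $C\cong\{0,1\}^i$ covering $C\setminus\{x^*\}$. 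No $H_\ell$ can contain all of $C$, since that would cover $x^*$, so each surviving restriction is a genuine hyperplane of the $i$-cube that misses $x^*$. Applying Theorem \ref{thm:ATcor} in dimension $i$ (these hyperplanes miss exactly one vertex, so $2^{i-m}\le 1$) gives $m\ge i$, completing $\ec(B)=\min\{i,n-i\}$.

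The main obstacle is the lower bound, specifically producing the subcube $C$: it must sit entirely inside one layer (which forces the $x_{i+j}=1-x_j$ pairing and uses $2i\le n$) while still containing $x^*$ as a vertex, so that restricting the cover reproduces exactly the Alon--F\"uredi configuration. The one technical point to verify carefully is that intersecting a hyperplane of $\R^n$ with the affine hull of $C$ returns a hyperplane (or $\emptyset$, or the whole hull, the last case being ruled out by $x^*$ being uncovered), so that the hyperplane count transfers faithfully to the $i$-cube.
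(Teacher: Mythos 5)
Your proof is correct. The lower bound is essentially the paper's argument: both of you embed an affine copy of $\{0,1\}^i$ into the $i$-th layer by pairing each of $i$ coordinates with a complementary partner (so that every vertex of the subcube has weight exactly $i$), arrange for the deleted point to be a vertex of that subcube, restrict the given cover to the affine hull, and invoke Theorem \ref{thm:ATcor} in dimension $i$; your pairing $x_{i+j}=1-x_j$ and the paper's map $x\mapsto(1-x_1,\dots,1-x_i,0,\dots,0,x_i,\dots,x_1)$ are the same idea up to a coordinate permutation and a reflection. The upper bound is where you genuinely diverge, and to your advantage. The paper takes the $i$ hyperplanes $\{x:x_1+\dots+x_i=j\}$ for $j\in\{0,\dots,i-1\}$, whose union meets $\{0,1\}^n$ in every point with $x_1+\dots+x_i\le i-1$ --- in particular in many points outside the $i$-th layer (e.g.\ the origin, via $j=0$) --- so as written it does not satisfy the paper's own definition of an exact cover of $B$, which forbids touching any point of $\{0,1\}^n\setminus B$. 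Your weighted hyperplanes $H_j=\{x:(i+1)x_j+\sum_{k\neq j}x_k=i\}$ are engineered so that each meets the cube only in the layer-$i$ points with $x_j=0$, which is exactly what exactness demands; the count is the same, but your construction is the one that actually works without repair. The only point worth stating explicitly in your write-up is the one you already flag: a hyperplane of $\R^n$ meets the affine hull of $C$ in the whole hull, a hyperplane of it, or nothing, and the first case is excluded because $x^*$ lies in the hull and is uncovered, so the surviving restrictions (at most $m$ of them, at least one since $i\ge 1$) miss exactly one vertex of $\{0,1\}^i$ and Theorem \ref{thm:ATcor} gives $m\ge i$.
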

\begin{proof}
We may assume that $i\leq n/2$ and that $b=(1,\dots,1,0,\dots,0)$ is the missing point. The upper bound follows by taking the hyperplanes 
\[
\{\{x:x_1+\dots+x_i=j\}:j\in \{0,\dots,i-1\}\}.
\]
For the lower bound, we claim that we may find a cube of dimension $i$ within the $i$-th layer for which $b$ plays the role of the origin. Indeed, consider the affine map   
\[
\iota:\{0,1\}^i\to \{0,1\}^n: x \mapsto (1-x_1,1-x_2,\dots,1-x_i,0,\dots,0,x_i,x_{i-1},\dots,x_1).
\]
That is, we view the point $b$ as the origin and take the directions of the form $(-1,0,\dots,0,1)$, $(0,-1,0,\dots,0,1,0)$, etcetera, as the axes of the cube. Now $(B\setminus \{b\}) \cap \iota(\{0,1\}^i) = \iota(\{0,1\}^i\setminus\{0\})$, and hence we may convert any exact cover for $B\setminus\{b\}$ to an exact cover for $\{0,1\}^i\setminus\{0\}$. The lower bound follows from the result of Alon and F\"{u}redi \cite{AlonFuredi}.
\end{proof}

\section{Asymptotics}
\label{sec:ec:asympt}
We first consider the asymptotics of $\ec(n,k)$ when $k$ is held fixed. 
For the upper bound, we prove the following  lemma.
\begin{lemma}
\label{lem:ec:k}
For all $k\in \N$ and $n\geq 2^{k-1}$, $\ec(n,k)\leq 1+\ec(n-1,k)$.
\end{lemma}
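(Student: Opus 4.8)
The plan is to mimic the dimension-reduction argument from Lemma~\ref{lem:cov:4}: given any $S\subseteq\{0,1\}^n$ with $|S|=k$, I will produce a single hyperplane that, together with an exact cover of an $(n-1)$-dimensional instance with $k$ missing points, exactly covers $\{0,1\}^n\setminus S$. Since this bounds $\ec(\{0,1\}^n\setminus S)$ by $1+\ec(n-1,k)$ for every such $S$, taking the maximum over all $S$ yields $\ec(n,k)\le 1+\ec(n-1,k)$.

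First I would use vertex transitivity (translation by a fixed vector of $\{0,1\}^n$, an affine bijection of the cube that maps hyperplanes to hyperplanes) to assume $0\in S$. Arranging the $k$ points of $S$ as the rows of a $k\times n$ matrix, I consider its columns: each column is a vector in $\{0,1\}^k$, and since the row corresponding to $0$ forces a $0$ in every column, there are at most $2^{k-1}$ possible column types. The argument then splits into two cases. If some coordinate $i$ is identically $0$ on $S$ (the all-zeros column type), I add the hyperplane $\{x:x_i=1\}$, which covers exactly the points with $x_i=1$ and none of $S$, leaving the instance $\{0,1\}^{n-1}\setminus S'$ on the face $\{x_i=0\}$ with $|S'|=k$. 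Otherwise every column is one of the $2^{k-1}-1$ nonzero types; since $n\ge 2^{k-1}>2^{k-1}-1$, the pigeonhole principle produces two coordinates $i,j$ on which all points of $S$ agree. Exactly as in Lemma~\ref{lem:cov:4} with $a=2$, I add the single hyperplane $\{x:x_i+x_j=1\}$ to cover all points with $x_i\ne x_j$, and then merge coordinates $i,j$ via the projection $\pi$ and the lift $P\mapsto L(P)$: a point with $x_i=x_j$ lies in $L(P)$ iff its projection lies in $P$, so any exact cover of $\{0,1\}^{n-1}\setminus\pi(S)$ lifts to cover the remaining points. In both cases a single new hyperplane reduces the problem to an $(n-1)$-dimensional instance with $k$ missing points, giving the claim.

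The step requiring the most care is the merge. I must verify that $\pi$ is injective on $S$, so that $|\pi(S)|=k$; this holds because any two points of $S$ already agree on coordinates $i,j$ and so are distinguished by the remaining coordinates. I must also check that the lifted hyperplanes together with $\{x:x_i+x_j=1\}$ cover precisely $\{0,1\}^n\setminus S$ while avoiding all of $S$, which follows from the equivalence between $L(P)$ and $P$ on points satisfying $x_i=x_j$. The only place the hypothesis $n\ge 2^{k-1}$ enters is the pigeonhole count in the second case, where it is exactly tight.
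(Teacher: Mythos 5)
Your proposal is correct and follows essentially the same route as the paper's proof: reduce to $0\in S$, split into the case of an all-zero coordinate versus the pigeonhole argument on the at most $2^{k-1}-1$ remaining column types, and then merge the two agreeing coordinates via the lift $P\mapsto L(P)$ together with the hyperplane $\{x:x_i+x_j=1\}$. The injectivity check for $\pi$ on $S$ that you flag is exactly the point the paper also verifies.
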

\begin{proof}
Fix $k\in \N$, $n\geq 2^{k-1}$ and a subset $S\subseteq \{0,1\}^n$ of size $|S|=k$. For any $i\in [n]$, let $S_{-i}\subseteq\{0,1\}^{n-1}$ be obtained from $S$ by deleting coordinate $i$ from each element of $S$. 
We claim that there exists an $i\in [n]$ such that $|S_{-i}|=k$ and
\begin{equation}
    \label{eq:ec:i}
    \ec(\{0,1\}^n\setminus S)\leq 1+\ec(\{0,1\}^{n-1}\setminus S_{-i}).
\end{equation}
The lemma follows immediately from this claim. 

By vertex transitivity, we may assume that $0\in S$. 
Suppose first that there exists $i\in[n]$ for which $s_i=0$ for all $s\in S$. Then $|S_{-i}|=k$. From an exact cover for $\{0,1\}^{n-1}\setminus S_{-i}$, we may obtain an exact cover for $\{x \in \{0,1\}^n\setminus S:x_i=0\}$. Combining with the hyperplane $\{x:x_i=1\}$, this gives an exact cover for $\{0,1\}^n\setminus S$. This proves (\ref{eq:ec:i}).

We henceforth assume that $0\in S$ and that the remaining $k-1$ elements of~$S$ cannot all be $0$ on the same coordinate. Hence there are at most $2^{k-1}-1$ possible values that $(s_i:s\in S)$ can take for $i\in [n]$.  Since $n\geq 2^{k-1}$, by the pigeonhole principle, there must exist coordinates $1\leq i<j\leq n$ with $s_i=s_j$ for all $s\in S$. This implies that $|S_{-i}|=|S|=k$. We now show (\ref{eq:ec:i}) is satisfied. After permuting coordinates, we may assume that $(i,j)=(1,2)$. An exact cover for $\{0,1\}^{n-1}\setminus S_{-1}$ is converted to an exact cover for $\{0,1\}^n\setminus S$ as in the proof of Lemma \ref{lem:cov:4}: any hyperplane of the form 
\[
P=\{y:v_1y_1+\dots +v_{n-1}y_{n-1}=\alpha\}
\]
is converted to
\[
L(P)=\left\{x:v_1\frac{x_1+x_2}2+v_2x_3+\dots +v_{n-1}x_n=\alpha\right\},
\]
and we add the hyperplane $\{x:x_1+x_2=1\}$ to the adjusted collection. 
\end{proof}
It is now easy to prove to that $\ec(n,k)=n+\Theta_k(1)$.
\begin{proof}[Proof of Theorem \ref{thm:ec:fixed_size}]
Let $k\in \N$. We need to prove that for all $n\geq 2^k$,
\[
n-\log_2(k)\leq \ec(n,k)\leq n-2^k+\ec(2^k,k).
\]
The upper bound is vacuous for $n=2^k$ and follows from $n-2^k$ applications of Lemma \ref{lem:ec:k} for $n>2^k$.
The lower bound follows from Theorem \ref{thm:ATcor}: if $n-\ell$ hyperplanes cover all but $k$ vertices, then $k\geq 2^\ell$, 
and hence $n-\ell\geq n-\log_2(k)$. 
(In fact, this shows $\ec(\{0,1\}^n\setminus S)\geq n-\log_2(k)$ for each subset $S\subseteq\{0,1\}^n$ of size $k$.)
\end{proof}

We now turn to the problem of comparing exact cover numbers for sets $S$ of different sizes. We use two auxiliary lemmas.

For the lower bound, we use a random argument for which we need to know the approximate number of intersection patterns of the hypercube.
An \textit{intersection pattern} of $\{0,1\}^n$ is a non-empty subset $P\subseteq \{0,1\}^n$ for which there exists a hyperplane $H$ with $H\cap \{0,1\}^n=P$. 
\begin{lemma}
\label{lem:q_n_num_int_patterns}
$\{0,1\}^n$ has at most $2^{n^2}$ possible intersection patterns. 
\end{lemma}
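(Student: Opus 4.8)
The plan is to bound the number of intersection patterns by bounding the number of distinct hyperplanes that can give rise to them, and the key observation is that a hyperplane's intersection with $\{0,1\}^n$ is determined by how it partitions the vertices according to the value of $\langle v, x\rangle$. Concretely, given a defining vector $v \in \R^n$, each vertex $x \in \{0,1\}^n$ is assigned the real number $\langle v,x\rangle$, and the intersection pattern is precisely the set of vertices achieving one particular value $\alpha$. So I would first argue that what matters is not the exact vector $v$ but only the \emph{ordering and equality structure} that $v$ induces on the $2^n$ vertices via the values $\langle v, x\rangle$.

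\medskip

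First I would count the number of distinct \emph{sign patterns} that the $2^n$ linear functionals $x \mapsto \langle v, x\rangle$ can produce. The natural tool here is to consider, for each pair of vertices $x, x' \in \{0,1\}^n$, the sign of $\langle v, x - x'\rangle$. Each such sign is determined by which side of the hyperplane $\{v : \langle v, x-x'\rangle = 0\}$ the vector $v$ lies on (or whether it lies on it). These are hyperplanes through the origin in the parameter space $\R^n$ indexed by the pairs of vertices, and a standard arrangement bound controls how many sign-vectors (cells) such an arrangement of hyperplanes can carve out. Since each intersection pattern is recoverable from this combinatorial data (the pattern is the set of vertices at the maximal level, or more carefully at a chosen level $\alpha$), bounding the number of cells bounds the number of patterns.

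\medskip

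The cleaner route, which I expect to be the intended one and which avoids invoking the full arrangement machinery, is direct: an intersection pattern $P$ is an affine slice of the cube, so it is an \emph{affine subset} of $\{0,1\}^n$ and in particular is determined by far less information than a general subset. I would count the possible patterns by noting that each hyperplane $H$ is determined up to its action on $\{0,1\}^n$ by a system of linear conditions, and that the number of genuinely distinct restrictions is at most the number of ways to choose the combinatorial type of $v$. Since $v$ has $n$ coordinates and the relevant data is which of the $2^n$ values $\langle v,x\rangle$ coincide and how they compare, a crude but sufficient bound is to observe that the pattern is determined by choosing, for each of the $n$ coordinates, a bounded amount of comparison data; multiplying through gives a bound of the form $2^{n^2}$. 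The cleanest phrasing is to say that the number of patterns is at most the number of cells in an arrangement of at most $\binom{2^n}{2} < 2^{2n}$ hyperplanes in $\R^n$, which is at most $(2^{2n})^n = 2^{2n^2}$ — and here I would need to be a little more careful to get the stated exponent $n^2$ rather than $2n^2$.

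\medskip

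\textbf{The main obstacle} is precisely this constant in the exponent: a naive arrangement bound gives $2^{O(n^2)}$ but with a constant larger than $1$, so the work is in tightening the count to land at exactly $2^{n^2}$. I expect the fix is to count more economically — for instance, observing that an intersection pattern is recoverable from strictly less data than a full cell of the pairwise-difference arrangement (one only needs the level set at a single value $\alpha$, not the entire ordering), or by choosing the right parametrization so that each of the $n$ coordinates contributes only a factor of $2^n$. I would organize the argument so that the final multiplication is manifestly $\prod_{i=1}^{n} 2^{n} = 2^{n^2}$, and treat the verification that this data indeed determines the pattern as the crux of the proof.
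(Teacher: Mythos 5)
Your proposal does not close the gap it identifies, and that gap is genuine rather than cosmetic. The parameter-space argument you sketch — classifying vectors $v$ by the sign pattern of $\langle v, x-x'\rangle$ over all pairs $x,x'\in\{0,1\}^n$ and counting cells of the resulting arrangement of $\binom{2^n}{2}<2^{2n}$ hyperplanes in $\R^n$ — gives at best $2^{2n^2+O(n)}$ cells, and on top of that each cell still admits up to $2^n$ choices of level set $\alpha$, so the method as described yields roughly $2^{2n^2+n}$. Your closing paragraph asserts that "being a little more careful" or reorganizing the product as $\prod_{i=1}^n 2^n$ will recover the exponent $n^2$, but no mechanism is given, and none is readily available within this framework: since the true count is $2^{(1+o(1))n^2}$ (as the paper notes), the constant $1$ in the exponent leaves essentially no slack, and the pairwise-difference arrangement inherently double-counts (each pattern is witnessed by many cells, and each cell carries far more information than the single level set you need). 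To be fair, a bound of the form $2^{Cn^2}$ would still suffice for the paper's applications after adjusting constants (e.g.\ replacing "$2n^2$ elements" by "$2Cn^2$ elements" in the union bound), but it does not prove the lemma as stated.

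The paper's proof uses a different and much more economical encoding, which exploits the affine structure of the pattern itself rather than the geometry of the parameter space: translate $P$ by a canonical element $x\in P$ (coordinatewise XOR) so that the translated pattern $x\oplus P$ is the intersection of a \emph{linear} hyperplane with $\{0,1\}^n$; then choose at most $n-1$ linearly independent vectors of $x\oplus P$ whose span meets $\{0,1\}^n$ exactly in $x\oplus P$. The tuple $(x,v_1,\dots,v_k,0,\dots,0)\in(\{0,1\}^n)^n$ determines $P$, giving the injection into a set of size $(2^n)^n=2^{n^2}$ directly. If you want to repair your write-up along the lines you were groping toward ("$n$ coordinates, each contributing a factor $2^n$"), the fix is to encode the pattern by an affinely independent spanning subset of $P$ itself (at most $n$ points of the cube, since $\dim \mathrm{aff}(P)\le n-1$, and $P=\mathrm{aff}(P)\cap\{0,1\}^n$), not by data about $v$.
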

\begin{proof}
We will associate each intersection pattern with a unique element from $(\{0,1\}^n)^n$.
Let $P\subseteq \{0,1\}^n$ be an intersection pattern with $P= H\cap \{0,1\}^n$ for $H$ a hyperplane. Then $|P|<2^n$. 

Let $x \in P$ be such that $\sum_{i=1}^nx_i2^i$ is minimal. 
Let $\oplus$ denote coordinate-wise addition modulo $2$ and write $x\oplus P=\{x\oplus p:p\in P\}\subseteq \{0,1\}^n$.
Note that $0\in x\oplus P$ since $x\in P$, and that $x\oplus P$ is the intersection of a linear subspace of dimension $n-1$ with $\{0,1\}^n$. (The linear subspace can be obtained from $H$ by a series of reflections.)
We greedily find $0\leq k\leq n-1$ linearly independent vectors $v_1,\dots,v_k\in x\oplus P$ whose linear span intersects $\{0,1\}^n$ in $x \oplus P$. We label $P$ with the $n$-tuple $(x,v_1,\dots,v_k,0,\dots,0)$, where we added $n-1-k$ copies of the vector $0$ at the end of the tuple.
This associates each intersection pattern to a unique element from $(\{0,1\}^n)^n$.
\end{proof}
The above proof is rather crude, but in fact not far from the truth: the number of possible intersection patterns is $2^{(1+o(1))n^2}$ (see e.g. \cite[Lemma 4.3]{Baldi}).

We also use an auxiliary result for the upper bound. The \textit{total domination number} of a graph $G$ is the minimum cardinality of a subset $D\subseteq V(G)$ such that each $v\in V(G)$ has a neighbour in $D$. \begin{lemma}[Theorem 5.2 in \cite{totaldomhypercube}]
\label{lem:total_dom_num}
The total domination number of the hypercube is at most $2^{n+1}/n$ for any $n \geq 1$. 
\end{lemma}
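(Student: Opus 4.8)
The plan is to exhibit an explicit total dominating set coming from a linear covering code over $\F_2$. Identify the vertices of the hypercube $Q_n$ with $\F_2^n$, let $e_1,\dots,e_n$ denote the standard basis vectors, and recall that $x,y$ are adjacent exactly when $x+y=e_i$ for some $i$. A set $D\subseteq \F_2^n$ is then a total dominating set precisely when every vertex has a neighbour in $D$, i.e.\ when $\bigcup_{i=1}^n (D+e_i)=\F_2^n$ (using that $-e_i=e_i$ over $\F_2$). The key point is that if $D$ is a \emph{linear} subspace, this condition becomes a simple statement about syndromes.

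Concretely, I would fix $k=\floor{\log_2 n}$, so that $2^k\leq n<2^{k+1}$, and choose a $k\times n$ matrix $H$ over $\F_2$ whose $n$ columns $h_1,\dots,h_n$ run through \emph{every} element of $\F_2^k$; this is possible exactly because $n\geq 2^k$. Setting $D=\ker H$, the columns span $\F_2^k$, so $H$ has rank $k$ and hence $|D|=2^{n-k}$. To verify domination, note that for $x\in\F_2^n$ and $i\in[n]$ we have $x+e_i\in D$ iff $Hx=h_i$, so $x$ has a neighbour in $D$ iff its syndrome $Hx$ coincides with some column of $H$. By construction every element of $\F_2^k$ occurs as a column, so this holds for all $x$, and $D$ is a total dominating set. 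The bound then follows from the choice of $k$: since $2^k>n/2$ we get
\[
\gamma_t(Q_n)\leq |D|=2^{\,n-k}=\frac{2^n}{2^k}<\frac{2^n}{n/2}=\frac{2^{n+1}}{n}.
\]

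The one subtlety I expect to need care is that this yields \emph{total} domination rather than ordinary domination: the vertices lying inside $D$ must themselves have a neighbour in $D$. This is precisely guaranteed by insisting that $0\in\F_2^k$ appears among the columns of $H$, equivalently that some $e_i\in\ker H=D$; then any $d\in D$ has syndrome $0$ and so is dominated by $d+e_i\in D$. Apart from this check (and the degenerate bookkeeping when $k=0$, where $D=\F_2^n$ and the bound is trivial), the argument is entirely a counting of cosets, and no use of the earlier Alon--F\"uredi machinery is needed.
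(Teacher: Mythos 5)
Your proof is correct. Note, however, that the paper does not actually prove this lemma: it is imported verbatim as Theorem 5.2 of the cited reference and used as a black box, so there is no internal argument to compare against. What you have written is a complete, self-contained derivation via linear covering codes, and the details check out: with $k=\floor{\log_2 n}$ one can indeed choose a $k\times n$ matrix $H$ over $\F_2$ whose columns exhaust $\F_2^k$ (since $n\geq 2^k$), the kernel $D$ then has size $2^{n-k}$ because $H$ is surjective, and the equivalence $x+e_i\in D \iff Hx=h_i$ shows every vertex has a neighbour in $D$. You correctly isolate the one point where total domination differs from ordinary domination --- a vertex $d\in D$ has syndrome $0$, so it is dominated only because $0$ occurs as a column, i.e.\ some $e_i$ lies in $D$ --- and the final estimate $2^{n-k}=2^n/2^k<2^{n+1}/n$ follows from $2^k>n/2$, which holds strictly for the floor of $\log_2 n$. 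The degenerate case $k=0$ (i.e.\ $n=1$) is handled as you say. The net effect of your approach is to make the paper self-contained on this point at essentially no cost, replacing an external citation with a short coset-counting argument; this is the standard covering-code construction, and it even yields the slightly sharper bound $2^{n-\floor{\log_2 n}}$.
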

Note that this bound must be close to tight since the hypercube is a regular graph of degree $n$, so any total dominating set has cardinality at least $2^n/n$.

We are now ready to prove $2^{n-2}/n^2\leq \ec(n) \leq 2^{n+1}/n$. 
\begin{proof}[Proof of Theorem \ref{thm:ec:arbitrary}]
For the lower bound, we need to give a subset $B\subseteq \{0,1\}^n$ that is difficult to cover exactly. We will find a subset $S$ for which all ``large" intersection patterns have a non-empty intersection with $S$. This means that to cover $\{0,1\}^n\setminus S$, we can only use hyperplanes with ``small" intersection patterns.
We take a subset $S\subseteq \{0,1\}^n$ at random by including each point independently with probability $1/2$. Note that the lower bound is trivial for $n \leq 8$, so we may assume that $n > 8$. 

For any fixed intersection pattern $P$, the probability that it is disjoint from our random set $S$ is  $\left(\frac12\right)^{|P|}$. By Lemma \ref{lem:q_n_num_int_patterns}, there are at most $2^{n^2}$ possible intersection patterns.
Hence, by the union bound, the probability that there is an intersection pattern which has at least $2n^2$ elements and does not intersect with $S$, is at most $2^{n^2}\left(\frac12\right)^{2n^2}$, which is smaller than $1/2$ for $n \geq 2$. With probability at least $1/2$, our random set $S$ has at most $2^{n-1}$ points. Hence, there exists a subset $S$ of size $2^{n-1}$ that `hits' all intersection patterns of size at least $2n^2$.
Any exact cover for $\{0,1\}^n \setminus S$ consists entirely of hyperplanes whose intersection pattern has size at most $2n^2$, and hence needs at least $|\{0,1\}^n\setminus S|/2n^2=2^{n-2}/n^2$ hyperplanes.

We now prove the upper bound. The Hamming distance on $\{0,1\}^n$ is given by $d(x,y)=\sum_{i=1}^n |x_i-y_i|$. A Hamming sphere of radius 1 around a point $x\in \{0,1\}^n$ is given by $S(x)=\{y\in \{0,1\}^n:d(x,y)=1\}$. We claim that any subset of a Hamming sphere is an intersection pattern. Since the cube is vertex-transitive, it suffices to prove our claim for $S(0)$. The hyperplane $\{x:\sum_{i=1}^n x_i=1\}$ intersects $\{0,1\}^n$ in $S(0)$. Intersecting that hyperplane with hyperplanes of the form $\{x:x_j=0\}$ gives a lower-dimensional affine subspace, and we can construct such a subspace which intersects $S(0)$ in any subset we desire. In order to turn the affine subspace into a hyperplane with the same intersection pattern, we may add generic directions that do not yield new points in the hypercube (e.g. consider adding $(1,\pi,0,\dots,0)$). This proves each subset of a Hamming sphere is an intersection pattern.

The hypercube has total domination number at most $2^{n+1}/n$ by Lemma~\ref{lem:total_dom_num}. Hence, we can find a subset $D$ of the cube such that each vertex has a neighbour in $D$. In particular, there are $M\leq 2^{n+1}/n$ Hamming spheres centered on the vertices in $D$ that cover the cube.
For any $B\subseteq \{0,1\}^n$, we write $B=B_1\cup \dots \cup B_M$ such that each $B_i$ is covered by at least one of the Hamming spheres. This means that each $B_i$ is a intersection pattern, and we may cover $B$ exactly using $M$ hyperplanes. This gives the desired exact cover of $B$ with at most $2^{n+1}/n$ hyperplanes.
\end{proof}

Noga Alon pointed out the following improvement on the constant of the lower bound in Theorem~\ref{thm:ec:arbitrary}. There are at most $2^{n^2}$ possible intersection patterns by Lemma~\ref{lem:q_n_num_int_patterns}, so if all possible nonempty $B\subseteq \{0,1\}^n$ can be achieved by taking a union of $x$ of them, then $2^{n^2x}\geq 2^{2^n} -1$. The left-hand side of this inequality is even and the right-hand side is odd, hence $2^{n^2x}\geq 2^{2^n}$  and so $x\geq \frac{2^{n}}{n^2}$.

\section{Conclusion}
\label{sec:ec:concl}
Based on the fact that $\ec(n,k)\leq n$ for $k=1,2,3,4$, one might hope to prove that in fact $\ec(n,k)\leq n+C$ for some constant $C>0$ (independent of~$k$). However, this is not true in general by Theorem \ref{thm:ec:arbitrary}. A~natural question is then whether this will be true for $n$ sufficiently large when $k$ is fixed. 
\begin{problem}
Is there a constant $C>0$, such that for any $k\in \N$ there exists a $n_0(k)\in \N$ such that $\ec(n,k)\leq n+C$ for all $n\geq n_0(k)$?
\end{problem}
In an earlier version of this paper \cite{AaronsonGGJK20}, we conjectured that for any $S\subseteq \{0,1\}^r$ and $n\in \N$ with $n\geq r$,
\[
\ec(\{0,1\}^{n}\setminus (S\times\{0\}^{n-r}))=\ec(\{0,1\}^r\setminus S) +n-r.
\]
This would have given a negative answer to the problem above, but the counterexample $S = \{1000, 1111,1001,1011,0110,0001,0010,0111\}$ when $n = 6$ was given by Adam Zsolt Wagner \cite{Adam}.

\smallskip

One approach to improving the lower bound in Theorem \ref{thm:ec:arbitrary} is to try to prove that, for some $\eps\in (0,1)$, the number of hyperplanes containing $n^{1+\eps}$ points is $O(2^{n^{1+\eps}})$. Unfortunately, this is false: there are $2^{(1+o(1))n^2}$ possible intersection patterns of size at least $n^2$. This can be seen by considering intersection patterns of the form $\{0,1\}^{\log_2(n^2)}\times B$ for $B\subseteq\{0,1\}^{n-\log_2(n^2)}$. (If $B$ is a non-empty intersection pattern, then $\{0,1\}^{\log_2(n^2)}\times B$ is an intersection pattern containing at least $n^2$ points.) 

On the other hand,  by taking every other layer we may intersect each `axis-aligned subcube' of the form $\{0,1\}^a\times \{x\}$, ensuring that no such intersection pattern can be used in a cover. However, there is a more general type of subcube to consider.

We say a subset $A\subseteq \{0,1\}^n$ of size $|A|=2^d$ forms a $d$-dimensional \emph{subcube} if there are vectors $u,w_1,\dots,w_d\in \R^n$ such that 
\[
A=\{u+\alpha_1w_1+\dots+\alpha_d w_d: \alpha_1,\dots,\alpha_d \in \{0,1\}\}.
\]
A solution to the following problem might help improve either the upper or lower bound of Theorem \ref{thm:ec:arbitrary}.
\begin{problem}
Fix $n,d \in \N$. What is the smallest cardinality of a subset $S\subseteq\{0,1\}^n$ for which $A\cap S\neq \emptyset$ for all $d$-dimensional subcubes $A \subseteq \{0,1\}^n$?
\end{problem}
This is of a similar flavour to a problem proposed by Alon, Krech and Szab\'{o}~\cite{AlonKrechSzabo}, who asked instead for the asymptotics of the above problem when the cubes have to be axis-aligned. A $d$-dimensional axis-aligned subcube is of the form $\{0,1\}^d\times\{x\}$ after permuting coordinates. Let $g(n,d)$ denote the minimal cardinality of a subset that hits all $d$-dimensional axis-aligned subcubes in $\{0,1\}^n$. The best-known asymptotic bounds for $g(n,d)$ are from~\cite{AlonKrechSzabo}:
\[
\frac{\log_2(d)}{2^{d+2}}\leq \lim_{n\to \infty}\frac{g(n,d)}{2^n} \leq \frac1{d+1}.
\]

Finally, we remark that we have already seen these subcubes come up in Lemma \ref{lem:cov:4} as well: the sets $S\subseteq \{0,1\}^n$ of size 4 with $\ec(\{0,1\}^n\setminus S)=n-2$ are exactly the 2-dimensional subcubes.

\subsection*{Acknowledgements} 
We thank Noga Alon for pointing out to us that the problem had also been suggested by F\"{u}redi and for providing the reference~\cite{Baldi}.

We would also like to thank Alex Scott for useful discussions and the Jagiellonian University for their hospitality in hosting us during the time this research was conducted.

\bibliographystyle{abbrv} 
\bibliography{covering}   

\begin{thebibliography}{1}

\bibitem{AaronsonGGJK20}
J.~Aaronson, C.~Groenland, A.~Grzesik, T.~Johnston, and B.~Kielak.
\newblock Exact hyperplane covers for subsets of the hypercube.
\newblock {\em arXiv:2010.00315v1}, 2020.

\bibitem{AlonFuredi}
N.~Alon and Z.~F\"{u}redi.
\newblock Covering the cube by affine hyperplanes.
\newblock {\em European J. Combinatorics}, 14:79--83, 1993.

\bibitem{AlonKrechSzabo}
N.~Alon, A.~Krech, and T.~Szab\'o.
\newblock Tur\'{a}n's theorem in the hypercube.
\newblock {\em SIAM Journal on Discrete Mathematics}, 21:66--72, 2007.

\bibitem{Baldi}
P.~Baldi and R.~Vershynin.
\newblock Polynomial threshold functions, hyperplane arrangements, and random
  tensors.
\newblock {\em SIAM Journal on Mathematics of Data Science}, 1:699--729, 2019.

\bibitem{CliftonHuang}
A.~Clifton and H.~Huang.
\newblock On almost k-covers of hypercubes.
\newblock {\em Combinatorica}, 40(4):511--526, 2020.

\bibitem{totaldomhypercube}
T.~Janakiraman, M.~Bhanumathi, and S.~Muthammai.
\newblock Domination parameters of hypercubes.
\newblock {\em International Journal of Engineering Science, Advanced Computing
  and Bio-Technology}, 1(1):19--28, 2010.

\bibitem{Adam}
A.~Z. Wagner.
\newblock Constructions in combinatorics via neural networks.
\newblock {\em arXiv:2104.14516}, 2021.

\end{thebibliography}

\end{document}